\newtheorem{theorem}{Theorem}[section]
\newtheorem{lemma}[theorem]{Lemma}
\theoremstyle{definition}
\newtheorem{example}[theorem]{Example}
\author{Eiichi Matsuhashi}
\address{Department of Mathematics, Shimane University, Matsue, Shimane 690-8504, Japan.}
\email{matsuhashi@riko.shimane-u.ac.jp}
\author{Yoshiyuki Oshima}
\address{Department of Mathematics, Shimane University, Matsue, Shimane 690-8504, Japan.}
\email{n20d101@matsu.shimane-u.ac.jp}
\title{Some theorems on colocally connected continua }
\begin{document}

\subjclass[2020]{Primary  54F15; Secondary 54F16}

\maketitle 

\begin{abstract}
We show that each refinable map  preserves colocal  connectedness of the domain while a proximately refinable map does not necessarily.   Also, we prove that  colocal connectedness  is a  Whitney property and is not a Whitney reversible property.  
    
\end{abstract}

%Let $X$ be the cartesian product of 1-dimensional spaces. A projection $P$ of $X$ is said to be $n$-$dimensional$  if the dimeinsion of the codomain of  $P$ is  $n$-dimensional.  

\section{Introduction}
In this paper, unless otherwise stated, all spaces are assumed to be metrizable. When we use the term $function$, we do not assume it to be continuous necessarily while we require $map$ to be continuous. 
Let $X$ be a continuum and $x \in X$. We say that  $X$ is {\it colocally connected} if for each $x \in X$ and  each neighborhood $V \subseteq X$ of $x$, there exists an open neighborhood $U \subseteq X$ of $x$ such that $x \in U \subseteq V$ and  $X \setminus U$ is connected. A continuum $X$ is said be $aposyndetic$ if for any two distinct points $x,y \in X$, there exists a subcontinuum $T \subseteq X$ such that $x \in {\rm Int}_X T \subseteq T \subseteq X \setminus \{y\}$, where ${\rm Int}_X T$ denotes the interior of $T$ in $X$. Jones introduced aposyndetic continua in \cite{jonesap}. Since then, aposyndetic continua have been studied for many years. 
It is known that every colocally connected continuum is aposyndetic \cite[Remark 5.4.15]{macias}. 
% Compared to aposyndetic continua, not many researches on  colocally connected continua have been done so far. However, as mentioned the above, colocal connectedness implies aposyndesis. 
As the consequence,  colocal connectedness implies many properties of continua (see \cite[Figure 6]{D}. See also \cite[p.239]{bobok} for other properties derived from  colocal connectedness).

If $(X,d)$ is a space and $A \subseteq X$, then we denote ${\rm sup}\{d(a,b) \ | \ a,b \in A \}$ by ${\rm diam}_d A$. Let $(X,d_X)$ and $(Y,d_Y)$ be continua and let $\varepsilon > 0$. A surjective map  $f: X \to Y$ is called an $\varepsilon$-$map$ if for each  $y \in Y$, ${\rm diam}_{d_X} f^{-1}(y) < \varepsilon$. If $g, ~ g_{\varepsilon} : X \to Y$ are  surjective maps such that $g_{\varepsilon}$ is an $\varepsilon$-map and $d_Y(g(x), g_{\varepsilon}(x)) < \varepsilon$ for each $x \in X$, then $g_{\varepsilon}$ is called an $\varepsilon$-$refinement$ $of$ $g$.  A surjective map $r : X \to Y$ is called a $refinable ~ map$ if for each $\varepsilon > 0$, there exists an $\varepsilon$-refinement $r_{\varepsilon} : X \to Y$ of $r$.   The notion of a refinable map was introduced in \cite{ford}. 
 
 If $X$ is 
a continuum, then we  denote the  space of all nonempty subcontinua of $X$ endowed with the Hausdorff metric by $C(X)$. $C(X)$ is called the $hyperspace$ $of$ $X$. A $Whitney$
$map$  is a map $\mu : C(X) \to [0,\mu(X)]$ satisfying  $\mu(\{x\})=0$ for each $x \in X$ and $\mu(A) < \mu(B)$  whenever $A, B \in C(X)$ and $A \subsetneq B$. It is well-known  that for each Whitney map  $\mu: C(X) \to [0,\mu(X)]$   
%(see \cite[Definition 13.1]{illanes}),  
and  each $t \in [0, \mu(X)]$,  $\mu^{-1}(t)$ is a continuum (\cite[Theorem 19.9]{illanes}). 
 
 A topological property $P$ is called a {\it Whitney property} if a continuum $X$ has property $P$, so does $\mu^{-1}(t)$ for each Whitney map  $\mu$ for $C(X)$ and  each $t \in [0, \mu(X))$.   Also, a topological property $P$ is called a {\it Whitney reversible property} provided that whenever $X$ is a continuum such that $\mu^{-1}(t)$ has property $P$ for each Whitney map $\mu$ for $C(X)$ and {for} each $t \in (0, \mu(X))$, then $X$ has property $P$. Many researchers have studied these properties so far.  As for information about Whitney properties and Whitney reversible properties, for example, see \cite[Chapter 8]{illanes}.

   In \cite{hosokawa}, Hosokawa proved that each refinable map defined on a continuum preserves aposyndesis. Also, in \cite{petrus} Petrus proved  that aposyndesis is a   Whitney property and is not a Whitney reversible property.  In this paper, we show that   each refinable map  preserves colocal  connectedness of the domain while a proximately refinable map does not necessarily (for the definition of a proximately refinable map, see section 2).   Also, we prove that  colocal connectedness  is a  Whitney property and is not a Whitney reversible property.  
 
 %In \cite{hosokawa}, Hosokawa proved that each refinable map defined on a continuum preserves aposyndeticity. Also, in \cite{petrus} Petrus proved  that aposyndeticity is a   Whitney property. 

\section{refinable maps and proximately refinable maps defined on colocally connected continua}

In this section, we deal with topics on refinable maps. First, we show an
example of a refinable map between colocally connected continua which is not a homeomorphism. 

\begin{example}
Take a continuum $X$  and a refinable map $f  : X \to f(X)$ such that $X$ and $f(X)$ are not homeomorphic (for an example of a refinable map which is not a homeomorphism, see \cite{ford}). Let $Y$ be a nondegenerate continuum. By \cite[Lemma 3.5]{loncar3}, $X \times Y$ and $f(X) \times Y$ are colocally connected. Then, a map $F : X \times Y \to f(X) \times Y ; (x,y) \mapsto (f(x),y)$ is a refinable map which is not a homeomorphism.

\end{example}

%\begin{definition} Let $X$ be a continuum and $x,y\in X.$
%\begin{itemize}
%\item The continuum $X$ is said to be {\it aposyndetic at x with respect to y} if there %exists a subcontinuum $M$ of $X$ such that $x\in int_X M\subseteq M\subseteq %X\setminus\{y\}$. If $X$ is aposyndetic at $x$ with respect to each point of %$X\setminus\{x\}$, then $X$ is said to be {\it aposyndetic at x}. A continuum is said to %be {\it aposyndetic} if it is aposyndetic at every point.

%\item The continuum $X$ is said to be {\it semiaposyndetic at x} if for each point $z\in %X\setminus\{x\}$, $X$ is aposyndetic at $x$ with respect to $z$ or $X$ is aposyndetic at %$z$ with respect to $x$. A continuum is said to be {\it semiaposyndetic} if it is %semiaposyndetic at each of its points.

%%\item The continuum $X$ is said to be {\it n-aposyndetic at x} if for any $n$-point set %$A$ in $X\setminus\{x\}$ there exists a subcontinuum $M$ of $X$ such that $x\in int_X %M\subseteq M\subseteq X\setminus A$. We say that $X$ is {\it n-aposyndetic} if $X$ is %$n$-aposydentic at each of its points. Observe that $1$-aposyndesis is the same as %aposyndesis.
%\end{itemize}
%\end{definition}

The following theorem is the main result in this section. 
Before the theorem, we give a notation. If $X$ is a space and $A \subset X$, then ${\rm Cl}_X A$ denotes the closure of $A$ in $X$.

\begin{theorem}
Let $(X,d_X)$ and $(Y,d_Y)$ be continua and let $f : X \to Y$ be a refinable map. If $X$ is colocally connected, then so is $Y$. 
\label{refinableco}
\end{theorem}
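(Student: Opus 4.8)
The plan is to verify colocal connectedness of $Y$ one point at a time: fixing $y_0\in Y$ and a neighborhood $V$ of $y_0$, I will produce an open set $U$ with $y_0\in U\subseteq V$ and $Y\setminus U$ connected. The whole strategy is to transport colocal connectedness across an $\varepsilon$-refinement. For small $\varepsilon>0$ let $f_\varepsilon$ be an $\varepsilon$-refinement of $f$; it is a surjective $\varepsilon$-map with $d_Y(f(x),f_\varepsilon(x))<\varepsilon$ for all $x$. The key construction is to enclose the fiber $f_\varepsilon^{-1}(y_0)$ in an open set $W\subseteq X$ whose complement is connected, and then set $U:=Y\setminus f_\varepsilon(X\setminus W)$.

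With this $U$ most of the work is routine. Since $W$ is open and $X\setminus W$ is connected, $f_\varepsilon(X\setminus W)$ is a continuum (continuous image of a compact connected set), so $U$ is open and $Y\setminus U=f_\varepsilon(X\setminus W)$ is connected. Because $f_\varepsilon$ is surjective, $y\in U$ forces $f_\varepsilon^{-1}(y)\subseteq W$, hence $U\subseteq f_\varepsilon(W)$; and since $\mathrm{diam}_{d_X}W$ is small, uniform continuity of $f$ together with $d_Y(f,f_\varepsilon)<\varepsilon$ confines $f_\varepsilon(W)$ to a small ball about $y_0$, so that $U\subseteq V$ once $\varepsilon$ and $\mathrm{diam}_{d_X}W$ are chosen small relative to $\mathrm{dist}(y_0,Y\setminus V)$. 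Finally $y_0\in U$ holds exactly when $f_\varepsilon^{-1}(y_0)\cap(X\setminus W)=\emptyset$, i.e. precisely when the \emph{entire} fiber $f_\varepsilon^{-1}(y_0)$ is contained in $W$.

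The main obstacle is this last containment. Colocal connectedness of $X$ only supplies connected-complement open sets around \emph{single} points, whereas $f_\varepsilon^{-1}(y_0)$ is a compact set (small, since $f_\varepsilon$ is an $\varepsilon$-map) that need not be connected; forming unions of single-point sets fails because complements of unions need not be connected, and centering on a base point of the original fiber $f^{-1}(y_0)$ fails because that fiber is not size-controlled. I would resolve this with an auxiliary \emph{enclosure lemma}: for every $\eta>0$ there is $\theta>0$ such that every compact $C\subseteq X$ with $\mathrm{diam}_{d_X}C<\theta$ lies in an open $W$ with $\mathrm{diam}_{d_X}W<\eta$ and $X\setminus W$ connected. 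This lemma follows from colocal connectedness by a compactness argument: were it false, there would be compact sets $C_n$ of diameter tending to $0$ admitting no such $W$; choosing $x_n\in C_n$ and passing to a convergent subsequence $x_n\to x_\ast$, colocal connectedness at $x_\ast$ yields a fixed small open $W_\ast$ with connected complement, and $W_\ast$ must contain $C_n$ for all large $n$, a contradiction.

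Granting the enclosure lemma, the theorem assembles quickly: given $y_0$ and $V$, pick $r>0$ with the closed $r$-ball about $y_0$ inside $V$, choose $\eta$ from the uniform continuity of $f$ so that $\eta$-close points have $f$-images within $r/3$, take the resulting $\theta$, and fix an $\varepsilon$-refinement with $\varepsilon<\min\{\theta,r/3\}$. Then $\mathrm{diam}_{d_X}f_\varepsilon^{-1}(y_0)<\varepsilon<\theta$, so the lemma encloses the fiber in a suitable $W$; the construction $U=Y\setminus f_\varepsilon(X\setminus W)$ then satisfies $y_0\in U\subseteq V$ with $Y\setminus U$ connected, as the triangle-inequality estimate above confirms. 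The degenerate cases, where $X$ or $Y$ is a single point, are immediate and can be dispatched at the outset.
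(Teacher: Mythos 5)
Your proof is correct, and while the final construction is the same as the paper's, the route to it differs in the one nontrivial step. Both arguments hinge on setting $U = Y \setminus f_{\varepsilon}(X \setminus W)$ for a connected-complement open set $W$ containing the \emph{refinement's} fiber $f_{\varepsilon}^{-1}(y_0)$, after which openness of $U$, connectedness of $Y\setminus U$, and $U\subseteq V$ follow by routine compactness and approximation estimates. Where you differ is in how the fiber gets enclosed. The paper takes a sequence of $\frac{1}{n}$-refinements, passes to a subsequence along which the fibers $f_{1/n}^{-1}(y)$ converge in the hyperspace, observes that the limit is a singleton $\{x\}$ (the diameters tend to $0$) with $x \in f^{-1}(y)$, and applies colocal connectedness once, at that specific point; the containment $f_{1/n_0}^{-1}(y) \subseteq B$ then holds for large $n_0$ automatically, and the quantitative control of $U \subseteq V$ comes from $\delta = \inf\{d_Y(b,c) \mid b \in f({\rm Cl}_X B),\ c \in Y\setminus V\}$ with ${\rm Cl}_X B \subseteq f^{-1}(V)$, with no appeal to uniform continuity. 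You instead prove a uniform enclosure lemma --- every compact set of sufficiently small diameter sits inside a small open set with connected complement --- by the contradiction/subsequence argument, and your proof of that lemma is sound: the limit neighborhood $W_\ast$ contains a metric ball about $x_\ast$, which eventually swallows the shrinking sets $C_n$. Your lemma is strictly more than the theorem needs (the paper's sequential trick extracts the enclosure only along a subsequence and only at the relevant fiber), and it costs you an extra ingredient (uniform continuity of $f$ plus the three-way triangle estimate), but it buys genuine reusability: the same lemma with ``connected complement'' replaced by ``complement with finitely many components'' adapts verbatim and yields the paper's Theorem \ref{refinablesemi} by the identical scheme. One pedantic point to nail down in a final write-up: choose $\eta < {\rm diam}_{d_X} X$ so that $X \setminus W \neq \emptyset$ (your dispatch of the degenerate cases covers this, but it is worth saying where it is used).
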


\begin{proof}
Let $y \in Y$ and let $V \subseteq Y$ be an open neighborhood of $y$. Since  $f$ is a refinable map, there exists a sequence $\{f_{\frac{1}{n}}\}_{n=1}^{\infty}$ of $\frac{1}{n}$-refinements of $f$. We may assume that $\lim f_{\frac{1}{n}}^{-1}(y)$ exists. Let $\lim f_{\frac{1}{n}}^{-1}(y)=\{x\}$.   Then, it is easy to see that $x \in f^{-1}(y)$. Since $X$ is colocally connected, there exists an open neighborhood $B \subseteq X$ of $x$ such that $x \in B \subseteq {\rm Cl}_X B \subseteq f^{-1}(V)$ and $X \setminus B$ is connected. Let $\delta = {\rm inf} \{d_Y(b,c) \ | \ b \in f({\rm Cl}_X B), \ c \in Y \setminus V \}$. Then, there exists $n_0 \in \mathbb{N}$ such that $f_{\frac{1}{n_0}}^{-1}(y) \subseteq B$ and $d_Y(f(x), f_{\frac{1}{n_0}}(x)) < \frac{\delta}{2}$ for each $x \in X$.  Let $U=Y \setminus f_{\frac{1}{n_0}}(X \setminus B)$. Then, we see that $Y \setminus U=f_{\frac{1}{n_0}}(X \setminus B)$ is connected and $y \in U \subseteq f_{\frac{1}{n_0}}( {\rm Cl}_X B) \subseteq V$. Therefore, we see that $Y$ is colocally connected.  \end{proof}

A continuum is said to be $semilocally ~ connected$ if for each $x \in X$ and each neighborhood $V \subseteq X$ of $x$ there exists an open neighborhood $U \subseteq X$ of $x$ such that $x \in U \subseteq V$ and $X \setminus U$ has finitely many components. 

\begin{theorem}
Let $X$ and $Y$ be continua and let $f : X \to Y$ be a refinable map. If $X$ is semilocally connected, then so is $Y$. \label{refinablesemi}
\end{theorem}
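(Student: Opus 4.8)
The plan is to mimic the proof of Theorem~\ref{refinableco} almost verbatim, replacing the single-point structure with a finitely-many-components structure throughout. The only genuine difference between colocal connectedness and semilocal connectedness is that $X \setminus U$ is allowed to have finitely many components rather than being connected, and the image of a set with finitely many components under a map has at most as many components as the preimage, so the argument should carry over cleanly.

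Concretely, I would begin exactly as before: fix $y \in Y$ and an open neighborhood $V \subseteq Y$ of $y$, choose a sequence $\{f_{1/n}\}$ of $\tfrac1n$-refinements of $f$, pass to a subsequence so that $\lim f_{1/n}^{-1}(y) = \{x\}$ exists as a single point, and note $x \in f^{-1}(y)$. Since $X$ is only semilocally connected, I now invoke that property instead of colocal connectedness: there is an open neighborhood $B \subseteq X$ of $x$ with $x \in B \subseteq \mathrm{Cl}_X B \subseteq f^{-1}(V)$ and $X \setminus B$ having finitely many components, say $C_1, \dots, C_k$. Then I set $\delta = \inf\{d_Y(b,c) \mid b \in f(\mathrm{Cl}_X B),\ c \in Y \setminus V\}$, choose $n_0$ so that $f_{1/n_0}^{-1}(y) \subseteq B$ and $d_Y(f(x),f_{1/n_0}(x)) < \tfrac\delta2$ for all $x$, and define $U = Y \setminus f_{1/n_0}(X \setminus B)$.

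The key point is that $Y \setminus U = f_{1/n_0}(X \setminus B) = f_{1/n_0}(C_1) \cup \cdots \cup f_{1/n_0}(C_k)$, a union of $k$ continua (continuous images of the components $C_i$ of the closed set $X \setminus B$, which are themselves continua since $X \setminus B$ is closed in a continuum and has finitely many components, hence each component is a subcontinuum). Thus $Y \setminus U$ has at most $k$ components. The verification $y \in U \subseteq f_{1/n_0}(\mathrm{Cl}_X B) \subseteq V$ goes through identically to the colocal case, using $f_{1/n_0}^{-1}(y) \subseteq B$ for $y \in U$ and the $\delta$-estimate together with the definition of $\delta$ for the final inclusion into $V$.

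I expect the main obstacle to be the bookkeeping about components: I must confirm that each component $C_i$ of $X \setminus B$ is actually a continuum (compactness follows from $X \setminus B$ being closed in the compact space $X$, and with finitely many components each component is both open and closed in $X \setminus B$, hence compact and connected), so that each $f_{1/n_0}(C_i)$ is a subcontinuum and the count of components of $Y \setminus U$ is controlled. Since a continuous image can only merge components, not split them, the bound ``at most $k$ components'' is safe, which is all semilocal connectedness demands. Everything else is a routine transcription of the previous proof, so I would keep the write-up short and point explicitly to Theorem~\ref{refinableco} for the shared steps.
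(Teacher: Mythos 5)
Your proposal is correct and follows essentially the same route as the paper, which proves the theorem precisely by repeating the proof of Theorem~\ref{refinableco} with $B$ chosen so that $X \setminus B$ has finitely many components and observing that $Y \setminus U = f_{\frac{1}{n_0}}(X \setminus B)$ then has finitely many components. Your explicit bookkeeping (each component of the closed set $X \setminus B$ is clopen in it, hence a subcontinuum, and a continuous image can only merge components) just fills in details the paper leaves implicit.
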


\begin{proof}
The proof is similar to the proof of the previous result. In fact, change the proof of Theorem \ref{refinableco} as follows:

\smallskip

$\bullet$ In the proof of Theorem \ref{refinableco}, we took an open neighborhood $B \subseteq X$ of $x$ such that $X \setminus B$ is connected. Instead of it, take an open neighborhood  $B \subseteq X$ of $x$ such that $X \setminus B$ has finitely many components.

\smallskip

$\bullet$ Also, in the same proof, we stated that $Y \setminus U$ is connected. However, by taking $B$ as  above, we can easily see that $Y \setminus U$ has finitely many components.

\medskip

By  these two remarks, we can prove Theorem \ref{refinablesemi}. \end{proof}

It is known that a continuum $X$ is aposydetic if and only if $X$ is semilocally connected (see \cite{jonesap}). Hence,  Theorem \ref{refinablesemi}  implies the  following result proven by Hosokawa \cite{hosokawa} and vice versa. 

\begin{theorem}{\rm (\cite[Corollary (p.368)]{hosokawa})}
Let $X$ and $Y$ be continua and let $f : X \to Y$ be a refinable map. If $X$ is aposyndetic, then so is $Y$. 
\end{theorem}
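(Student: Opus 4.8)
The plan is to deduce this directly from Theorem~\ref{refinablesemi} together with the Jones characterization of aposyndesis recalled immediately above the statement, namely that a continuum is aposyndetic if and only if it is semilocally connected (\cite{jonesap}). Since the assertion concerns a single refinable map $f : X \to Y$ and the corresponding preservation statement for semilocal connectedness has already been established, no new topological construction is required; the entire task reduces to translating the hypothesis and conclusion through that equivalence.

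Concretely, I would proceed in three short steps. First, apply the Jones characterization to the hypothesis: the assumption that $X$ is aposyndetic is equivalent to the assumption that $X$ is semilocally connected. Next, apply Theorem~\ref{refinablesemi} to the refinable map $f$: since $X$ is semilocally connected, so is $Y$. Finally, apply the Jones characterization once more, now to $Y$: since $Y$ is semilocally connected, $Y$ is aposyndetic, which is the desired conclusion. This is a one-line chain of implications with an appeal to the previous theorem in the middle.

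I do not expect any substantive obstacle here; the proof is routine once the inputs are in place. The only point worth flagging is that all the genuine content is carried by the two external facts being invoked—the Jones characterization (itself a nontrivial result) and Theorem~\ref{refinablesemi}—so the argument presented here is purely a bookkeeping step. This also explains the \emph{vice versa} remark preceding the statement: because the equivalence between aposyndesis and semilocal connectedness runs in both directions, the two preservation theorems are logically interchangeable, and one could equally well have derived Theorem~\ref{refinablesemi} from this corollary by the same translation.
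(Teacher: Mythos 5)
Your proposal is correct and matches the paper exactly: the paper presents this theorem as an immediate consequence of Theorem~2.4 combined with Jones's equivalence of aposyndesis and semilocal connectedness, which is precisely your three-step translation. Your closing remark about the \emph{vice versa} also mirrors the paper's own comment that the two preservation theorems imply each other.
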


If $(X,d)$ is a space,  $x \in X$ and $\varepsilon > 0$, then we denote the set $\{ y \in X \  | \ d(x,y) < \varepsilon\}$ by $U_d(x,\varepsilon)$. 
A function $f: (X,d_X) \to (Y,d_Y)$ between spaces is said to be $\varepsilon$-$continuous$ if for each
$x \in X$ there exists an open neighborhood $U \subset X$ of $x$  such that $f(U) \subseteq U_{d_Y}(f(x), \varepsilon)$. A surjective function  $g:(X,d_X) \to (Y,d_Y)$  is called a {\it strong $\varepsilon$-function} if for each $y \in Y$, there exists an open neighborhood $V \subseteq Y$ of $y$ such that ${\rm diam}_{d_X} g^{-1}(V) < \varepsilon$.

Let $f:(X,d_X) \to (Y,d_Y)$ be a surjective function between continua. 
A surjective
function $g: X \to Y$ is called a {\it proximate $\varepsilon$-refinement of $f$}  
if $g$ is $\varepsilon$-continuous, $g$ is a strong $\varepsilon$-function 
and $d_Y(f(x), g(x)) < \varepsilon$ for each $x \in X$. 
A surjective function $p: X \to Y$ is said to be  {\it proximately refinable} 
if for any $\varepsilon > 0$ there exists a proximate $\varepsilon$-refinement of $p$. We can easily see that every proximately refinable function is a map. The notion of a proximately refinable map was introduced in \cite{grace}.

\smallskip

The following result was proven by Grace and Vought in \cite{grace2}.

\begin{theorem}{\rm (\cite[Theorem 2]{grace2})}
A surjective map defined on a graph is proximately refinable if and only if it is monotone. 
\label{proximate}

\end{theorem}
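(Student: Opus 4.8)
The plan is to prove the two implications separately. Throughout, $X$ is a graph, so every point of $X$ has arbitrarily small neighbourhoods that are finite unions of arcs issuing from it; in the direction in which $f$ is assumed monotone I will also use the standard fact that the monotone image $Y=f(X)$ of a graph is again a graph, so that points of $Y$ likewise have finite ``star'' neighbourhoods. Write $f : X \to Y$ for the given surjective map.

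\emph{Proximate refinability implies monotonicity.} I would argue the contrapositive. Suppose $f$ is not monotone, so some fibre $f^{-1}(y_0)$ is disconnected; pick two of its components together with points $p_0,p_1$ in them, put $\eta:=d_X(p_0,p_1)>0$, and choose disjoint open sets $W_0,W_1$ containing these two components with $d_X(W_0,W_1)>\eta/2$. By continuity of $f$ there is $\delta_0>0$ with $f^{-1}(U_{d_Y}(y_0,\delta_0))\subseteq W_0\cup W_1$. Now fix $\varepsilon$ with $0<\varepsilon<\eta/2$ and let $g$ be a proximate $\varepsilon$-refinement of $f$. The mechanism I expect to exploit is this: the strong-$\varepsilon$-function condition yields, for each relevant value near $y_0$, an open set whose $g$-preimage has ${\rm diam}_{d_X}<\varepsilon<\eta/2$; since $d_Y(f(x),g(x))<\varepsilon$, such a preimage consists of points whose $f$-value is close to $y_0$, hence it lies in $W_0\cup W_1$ and, being of diameter $<\eta/2$, inside a single one of $W_0,W_1$. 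Surjectivity of $g$ forces these small-diameter preimages to cover all of $W_0\cup W_1$, and chaining along an arc from $p_0$ to $p_1$ — on which $\varepsilon$-continuity makes $g$ vary by less than $\varepsilon$ between consecutive points of a sufficiently fine subdivision — should force one such preimage to meet both $W_0$ and $W_1$, contradicting $d_X(W_0,W_1)>\eta/2>\varepsilon$.

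\emph{Monotonicity implies proximate refinability.} Here I would construct, for each $\varepsilon>0$, a proximate $\varepsilon$-refinement $g$ by hand. Off the non-degenerate fibres $f$ is locally one-to-one, so there I keep $g$ a small adjustment of $f$. On each collapsed fibre $F=f^{-1}(y)$, which is a subgraph of $X$, I would re-route $g$ to map $F$ in finitely many pieces onto a ``spread-out'' set of $Y$-diameter $<\varepsilon$ lying in the local star of $Y$ at $y$, allowing $g$ to be discontinuous since only $\varepsilon$-continuity is demanded. Because the whole re-routing stays inside $U_{d_Y}(y,\varepsilon)$, the closeness condition $d_Y(f(x),g(x))<\varepsilon$ holds automatically; arranging the spreadings so that each value of $Y$ is attained only by a small-diameter piece of $F$ delivers the strong-$\varepsilon$-function condition.

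I expect the principal obstacle to lie in this ``sufficiency'' direction, in carrying out the re-routing so that $g$ stays \emph{surjective} while remaining a strong $\varepsilon$-function at the ``junction'' values — those $y$ near which a collapsed fibre, its spread-out image, and the images of the adjacent non-collapsed branches all accumulate. Naively, the top of a spread-out interval and an approaching non-collapsed branch both pile up over $y$, destroying either surjectivity or the small-preimage condition, and dissolving this conflict requires careful bookkeeping over the finitely many branches of $Y$ at each such $y$. In the ``necessity'' direction the delicate point is instead a scale mismatch: the strong-$\varepsilon$-function condition only supplies \emph{some} (possibly very small) neighbourhood with small preimage, while closeness controls $g$ only up to $\varepsilon$; consequently the confinement-and-chaining argument must be run over a finite subcover of $Y$ by such neighbourhoods rather than at the single value $y_0$, and it is precisely at this step that the finiteness of the graph is used.
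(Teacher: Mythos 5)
You should first note that the paper does not prove this statement at all: it is quoted verbatim from Grace and Vought (\cite[Theorem 2]{grace2}) as a known tool, so there is no in-paper proof to compare against and your proposal must stand on its own. It does not: it is a strategy outline in which both implications stop exactly at the step you yourself identify as the crux. In the necessity direction there are concrete problems. First, for the inclusion $f^{-1}(U_{d_Y}(y_0,\delta_0))\subseteq W_0\cup W_1$ you need $W_0\cup W_1$ to contain the \emph{entire} fibre $f^{-1}(y_0)$, which may have infinitely many components, not merely the two you selected (fixable, but as written false). Second, ``surjectivity of $g$ forces these small-diameter preimages to cover all of $W_0\cup W_1$'' is a non sequitur: the sets $g^{-1}(V_y)$ cover $X$ because the $V_y$ cover $Y$, surjectivity plays no role. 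Third, and decisively, the confinement-and-chaining mechanism does not close. Along an arc from $p_0$ to $p_1$, $\varepsilon$-continuity gives an $\varepsilon$-chain of $g$-values that drifts away from $y_0$ (the arc must leave $W_0\cup W_1$, where $f$-values, hence $g$-values, are at distance roughly $\delta_0$ from $y_0$) and then returns; each covering set $g^{-1}(V_y)$ has diameter $<\varepsilon$ and can simply stay on one side throughout, so no pigeonhole conflict arises. The scale mismatch you name is precisely the obstruction: the strong $\varepsilon$-function condition supplies a $V_y$ possibly far smaller than $\varepsilon$, while closeness and $\varepsilon$-density of values operate at scale $\varepsilon$; saying the argument ``must be run over a finite subcover'' does not bridge these scales, since the finitely many sets $g^{-1}(V_{y_i})$ can transition gradually between the two sides through the region outside $W_0\cup W_1$ without any single one straddling the gap.

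The sufficiency direction is in worse shape: no construction is actually given. The plan to ``re-route'' each collapsed fibre onto a spread-out set of small $Y$-diameter, with ``careful bookkeeping'' at junction values, is not a proof sketch but a restatement of what must be proved, and you explicitly flag the simultaneous verification of surjectivity and the strong-$\varepsilon$ condition near junctions as the principal obstacle without resolving it. Moreover, a monotone map on a graph may have infinitely many nondegenerate fibres (only finitely many of diameter at least a fixed $\eta>0$ --- an observation your fibre-by-fibre re-routing would need and does not make); the assertion that the closeness condition ``holds automatically'' covers only points of the re-routed fibres, not the ``small adjustment'' made elsewhere; and ``off the non-degenerate fibres $f$ is locally one-to-one'' is unjustified, since degenerate fibres can accumulate on nondegenerate ones. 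In summary, the overall architecture (contrapositive via a disconnected fibre for necessity; explicit construction of discontinuous but $\varepsilon$-continuous refinements for sufficiency) is a reasonable opening and consistent with how such results are proved, but both halves contain genuine gaps at their decisive steps, so the proposal does not constitute a proof of either implication.
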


By the following example, we see that a proximately refinable map does not preserve colocal connectedness of the domain necessarily. Before the example, we give a notation. If $X$ is a space and $A \subset X$, then ${\rm Bd}_X A$ denotes the boundary of $A$ in $X$.

\begin{example}
Let $X$ be the subcontinuum in $\mathbb{R}^2$ defined by $X = {\rm Bd}_{\mathbb{R}^2} [0,1]^2 \cup \{(\frac{1}{2},y) \  | \  0 \le y \le 1\}$.   Let $Y$ be the quotient space obtained from $X$ by shrinking $\{(\frac{1}{2},y) \  | \  0 \le y \le 1\} $ to the
point and let $f : X \to Y$ be the quotient map. Then, it is easy to see that $f$ is a surjective monotone map, $X$ is a colocally connected graph, and $Y$ is not colocally connected. By Theorem \ref{proximate}, $f$ is proximately refinable. Hence, colocal connectedness of the domain is not necessarily  preserved by a proximately  refinable map.

\end{example}

\section{A Whitney property and a Whitney reversible property}

Let $X$ be a continuum and $p \in X$. We say that $p$ is a $weak$ $cut$ $point$ $of$ $X$ if there exist distinct points $x,y \in X \setminus \{p\}$ such that for each  subcontinuum $C \subseteq X$ with $x,y \in C$, $p \in C$. If a point of $X$ is not a weak cut point, then the point is called a  $non$-$weak$ $cut$ $point$. It is easy to see that if a continuum $X$ is colocally connected, then $X$ is a continuum having only non-weak cut points.

As mentioned earlier, aposyndesis is a Whitney property and is not a Whitney reversible property. Also, it is known that the property of having only non-weak cut points is a Whitney property and is not a Whitney reversible property (\cite[Theorem 2.3 and Theorem 2.10]{bautista}).  In this section, we prove that  colocal connectedness  is a  Whitney property and is not a Whitney reversible property. 

The main aim of this section is to prove Theorem \ref{whitneyproperty}. To prove the theorem, we need  Lemma \ref{exe}. Before stating those results, we give notation. If $(X,d)$ is a continuum and $A$ is a closed subset of $X$, then we denote the set $\{x \in X \ | \ {\rm there \ exists }\ a \in A \ {\rm such \ that } \ d(x,a) < \varepsilon \}$ by $N_d(A, \varepsilon)$. Also, $H_d$ denotes the Hausdorff metric on $C(X)$ induced  by $d$.  Finally, if $\mathcal{A}$ is a family of subsets of $X$, then we denote ${\rm sup} \{{\rm diam}_d A \ | \ A \in \mathcal{A}\}$ by mesh$_d\mathcal{A}$.

\begin{lemma}{\rm(\cite[Exercise 4.33 (b)]{nadler1})}
Let $(X,d)$ be a continuum and let $\mu : C(X) \to [0, \mu(X)]$ be a Whitney map. Then, for each $\varepsilon > 0$, there exists $\delta > 0$ such that if $A,B \in C(X)$ satisfy $B \subseteq N_d(A, \delta)$ and $|\mu(A) - \mu(B)| < \delta$, then $H_d(A,B) < \varepsilon$. 
\label{exe}
\end{lemma}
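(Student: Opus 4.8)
The plan is to argue by contradiction, exploiting the compactness of $C(X)$ together with the strict monotonicity built into the Whitney map. First I would record a reduction: as soon as $\delta \le \varepsilon$, the hypothesis $B \subseteq N_d(A, \delta)$ already yields $B \subseteq N_d(A, \varepsilon)$, which is one of the two containments needed for $H_d(A,B) < \varepsilon$. Thus the only real content is to force $A \subseteq N_d(B, \varepsilon)$, and this is precisely the direction that the condition $|\mu(A) - \mu(B)| < \delta$ is meant to control: near-equality of Whitney values should prevent $A$ from protruding far beyond $B$.

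Assuming the conclusion fails for some $\varepsilon > 0$, I would, for each $n$, take $\delta = \tfrac{1}{n}$ to obtain $A_n, B_n \in C(X)$ with $B_n \subseteq N_d(A_n, \tfrac{1}{n})$ and $|\mu(A_n) - \mu(B_n)| < \tfrac{1}{n}$, yet $H_d(A_n, B_n) \ge \varepsilon$. Since $X$ is a continuum, $C(X)$ is compact, so after passing to a subsequence I may assume $A_n \to A_0$ and $B_n \to B_0$ in $(C(X), H_d)$. The crucial geometric step is then to show $B_0 \subseteq A_0$: given $b \in B_0$, Hausdorff convergence $B_n \to B_0$ supplies points $b_n \in B_n$ with $b_n \to b$; each $b_n$ lies within $\tfrac{1}{n}$ of some $a_n \in A_n$, and compactness of $X$ lets me assume $a_n \to a_0$, where $a_0 \in A_0$ because $A_n \to A_0$. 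As $d(b_n, a_n) < \tfrac{1}{n} \to 0$, I conclude $b = a_0 \in A_0$, so $B_0 \subseteq A_0$.

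With this inclusion in hand the contradiction is quick. Continuity of $\mu$ gives $\mu(A_n) \to \mu(A_0)$ and $\mu(B_n) \to \mu(B_0)$, while $|\mu(A_n) - \mu(B_n)| < \tfrac{1}{n}$ forces $\mu(A_0) = \mu(B_0)$. The defining property of a Whitney map, namely $\mu(C) < \mu(D)$ whenever $C \subsetneq D$, combined with $B_0 \subseteq A_0$ and $\mu(A_0) = \mu(B_0)$, forces $A_0 = B_0$. Finally, continuity of the metric $H_d$ yields $H_d(A_n, B_n) \to H_d(A_0, B_0) = 0$, contradicting $H_d(A_n, B_n) \ge \varepsilon$ for all $n$.

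The step I expect to require the most care is establishing $B_0 \subseteq A_0$, since it rests on correctly extracting convergent point-sequences from the Hausdorff limits and on invoking compactness of $X$ to produce the limit point $a_0 \in A_0$; once that inclusion is secured, the remainder is a routine appeal to the continuity of $\mu$ and of $H_d$ and to the strict monotonicity of the Whitney map.
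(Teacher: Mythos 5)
Your proof is correct. Note first that the paper does not prove this lemma at all: it is quoted verbatim from Nadler's book as Exercise 4.33(b), so there is no in-paper argument to compare against. Your sequential-compactness argument is the standard (and essentially the intended) solution to that exercise: extract $A_n, B_n$ witnessing failure at $\delta = \tfrac1n$, pass to Hausdorff limits $A_0, B_0$ in the compact space $C(X)$, show $B_0 \subseteq A_0$ from $B_n \subseteq N_d(A_n, \tfrac1n)$, and then use continuity of $\mu$ (which holds, since ``map'' means continuous throughout) together with strict monotonicity to force $A_0 = B_0$, contradicting $H_d(A_n, B_n) \ge \varepsilon$. All the delicate steps are handled properly: choosing $b_n \in B_n$ with $b_n \to b$ is legitimate (take $b_n$ a nearest point of the compact set $B_n$ to $b$, so $d(b_n, b) \le H_d(B_n, B_0) \to 0$), the per-point subsequence extraction for $a_n$ is harmless since you only need $b \in A_0$ one point at a time, and $a_0 \in A_0$ follows from $d(a_0, A_0) \le d(a_0, a_n) + H_d(A_n, A_0) \to 0$ with $A_0$ closed. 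Your opening reduction (that $\delta \le \varepsilon$ already gives $B \subseteq N_d(A, \varepsilon)$) is accurate but redundant, since the contradiction argument establishes both halves of the Hausdorff estimate at once; it could be deleted without loss.
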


\begin{theorem}
Colocal connectedness is a Whitney property.
\label{whitneyproperty}
\end{theorem}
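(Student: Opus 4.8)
The plan is to use Lemma \ref{exe} to replace the given neighbourhood by a containment neighbourhood, and then to prove that the complementary set, consisting of the subcontinua meeting a fixed closed set, is connected. Fix $t \in [0,\mu(X))$, an element $A \in \mu^{-1}(t)$, and a neighbourhood $\mathcal{V}$ of $A$ in $\mu^{-1}(t)$; I may assume $\mathcal{V}=\{B \in \mu^{-1}(t) : H_d(A,B) < \varepsilon\}$. Since every member of $\mu^{-1}(t)$ has $\mu$-value $t$, the hypothesis $|\mu(A)-\mu(B)|<\delta$ in Lemma \ref{exe} is automatic, so the lemma produces $\delta>0$ such that $B \in \mu^{-1}(t)$ with $B \subseteq N_d(A,\delta)$ satisfies $B \in \mathcal{V}$. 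Hence the sets $\{B\in\mu^{-1}(t):B\subseteq W\}$, taken over open $W$ with $A\subseteq W\subseteq N_d(A,\delta)$, form a neighbourhood base at $A$, and it suffices to find one such $W$ for which $\mathcal{U}:=\{B\in\mu^{-1}(t):B\subseteq W\}$ has connected complement $\mathcal{G}:=\{B\in\mu^{-1}(t):B\cap F\neq\emptyset\}$, where $F:=X\setminus W$. Here $\mathcal{U}$ is open and $\mathcal{G}$ is closed (hence compact) in $\mu^{-1}(t)$.

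Two facts about the sets $\mathcal{G}_p:=\{B\in\mu^{-1}(t):p\in B\}$ drive the argument. First, each $\mathcal{G}_p$ is a nonempty subcontinuum of $\mu^{-1}(t)$: it is nonempty because an order arc from $\{p\}$ to $X$ realises every value in $[0,\mu(X)]$, and it is connected because it is a Whitney level of the containment hyperspace $\{B\in C(X):p\in B\}$. Second, $p\mapsto\mathcal{G}_p$ is upper semicontinuous, since $\{(p,B):p\in B\in\mu^{-1}(t)\}$ is closed in $X\times\mu^{-1}(t)$ and $\mu^{-1}(t)$ is compact. Now suppose for contradiction that $\mathcal{G}=\mathcal{P}\sqcup\mathcal{Q}$ is a separation; then $\mathcal{Q}$ is closed in $\mu^{-1}(t)$. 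Because $\mathcal{G}=\bigcup_{p\in F}\mathcal{G}_p$ and each $\mathcal{G}_p$ is connected, every $\mathcal{G}_p$ lies in $\mathcal{P}$ or in $\mathcal{Q}$. Setting $F^{P}=\{p\in F:\mathcal{G}_p\subseteq\mathcal{P}\}$ and $F^{Q}=\{p\in F:\mathcal{G}_p\subseteq\mathcal{Q}\}$, the condition $\mathcal{G}_p\subseteq\mathcal{P}$ means $\mathcal{G}_p\subseteq\mu^{-1}(t)\setminus\mathcal{Q}$, which is open in $p$ by upper semicontinuity; thus $F=F^{P}\sqcup F^{Q}$ is a partition of $F$ into disjoint nonempty closed subsets of $X$.

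To reach a contradiction I will produce a subcontinuum $B$ of $X$ with $\mu(B)\le t$ meeting both $F^{P}$ and $F^{Q}$; an order arc from $B$ to $X$ then yields $B'\supseteq B$ with $\mu(B')=t$, and $B'$ still meets $F^{P}$ and $F^{Q}$, so $B'\in\mathcal{P}\cap\mathcal{Q}$, which is absurd. The set $W$ is chosen to make such a crossing available: covering $A$ by finitely many small open sets $U_{x_1},\dots,U_{x_n}$ with $x_i\in A$, ${\rm Cl}_X U_{x_i}\subseteq N_d(A,\delta)$ and $X\setminus U_{x_i}$ connected (this is where colocal connectedness of $X$ is invoked), and putting $W=\bigcup_i U_{x_i}$, the complement $F=X\setminus W$ lies outside a thin shell around $A$. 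The content of the crux is that, because $X$ is colocally connected, the two parts $F^{P}$ and $F^{Q}$ must come within distance on the order of $\delta$ of one another and be joined there by a subcontinuum of small diameter; since $\mu(\{x\})=0$ and $\mu$ is continuous, subcontinua of sufficiently small diameter have $\mu$-value less than $t$, which furnishes the required $B$.

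I expect this crossing step to be the main obstacle. The example of the $2$-sphere with $A$ the equator shows that $F$ genuinely need not be connected, so one cannot simply arrange $X\setminus W$ to be connected; what rescues the argument is that the two pieces of a disconnected $F$ abut the thin shell around $A$ on sides that lie close together, permitting a short, hence low-$\mu$, crossing. By contrast, for an arc $X$ with $A$ a proper subarc the two pieces of $F$ sit at the far ends of $A$ and no low-$\mu$ crossing exists; this is precisely the behaviour excluded by colocal connectedness, since an interior point of an arc violates it. Establishing that colocal connectedness of $X$ forbids such ``long cut'' behaviour and guarantees a crossing continuum of $\mu$-value at most $t$ is the heart of the proof; granting it, the contradiction above shows $\mathcal{G}$ is connected, and therefore $\mu^{-1}(t)$ is colocally connected.
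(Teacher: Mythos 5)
Your reduction via Lemma \ref{exe} to containment-type neighbourhoods is sound, and the supporting facts you quote are correct: each $\mathcal{G}_p=\{B\in\mu^{-1}(t):p\in B\}$ is a nonempty subcontinuum of $\mu^{-1}(t)$, the assignment $p\mapsto\mathcal{G}_p$ is upper semicontinuous, and the clopen decomposition $F=F^{P}\sqcup F^{Q}$ goes through. But the argument stops exactly where you say it does: the existence of a crossing continuum $B$ with $\mu(B)\le t$ meeting both $F^{P}$ and $F^{Q}$ is asserted, not proved, and the heuristic you offer for it is wrong. Colocal connectedness only forces each component of $F=X\setminus W$ to come within roughly $\delta$ of $A$ (its boundary lies in ${\rm Bd}_X W\subseteq N_d(A,\delta)$); it does \emph{not} force $F^{P}$ and $F^{Q}$ to come within distance $O(\delta)$ of each other. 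The two sides may approach $A$ only near widely separated points $a_1,a_2\in A$, in which case any continuum joining them must in effect contain a subcontinuum $A'$ of $A$ running from near $a_1$ to near $a_2$. Since $\mu(A')$ can be arbitrarily close to $t$, attaching even two $\delta$-small connectors can push the value of the resulting continuum strictly above $t$, and then your order-arc growth is unavailable (it requires $\mu\le t$ \emph{before} growing). So the step you yourself call the heart of the proof is a genuine gap, not a routine verification, and it is unclear that it can be closed at all for your choice of neighbourhood $\{B\in\mu^{-1}(t):B\subseteq W\}$.

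The paper sidesteps the difficulty by choosing a neighbourhood whose complement is connected by construction rather than by contradiction. It covers $A$ by a finite chain $\mathcal{H}=\{H_i\}_{i=1}^n$ of small open sets with each $X\setminus H_i$ connected, arranged (allowing repetitions) so that consecutive members meet and any two meeting members lie in a common set $O_a$ with $\mu(X\setminus O_a)>t$; the neighbourhood is then $\mathcal{O}=\{B\in\mu^{-1}(t): B\cap H_i\neq\emptyset \text{ for every } i\}$, whose complement is the \emph{finite} union $\bigcup_{i=1}^n\mathcal{H}_i$, where $\mathcal{H}_i=\{C\in\mu^{-1}(t):C\subseteq X\setminus H_i\}$. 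Each $\mathcal{H}_i$ is a continuum by \cite[Exercise 27.7]{illanes}, and $\mathcal{H}_i\cap\mathcal{H}_{i+1}\neq\emptyset$ because \cite[Theorem 14.6]{illanes} yields $K\in\mu^{-1}(t)$ with $K\subseteq X\setminus O_a\subseteq(X\setminus H_i)\cap(X\setminus H_{i+1})$. Indexing the complement by finitely many sets $H_i$ rather than by the points of $F$ is precisely what makes the chaining verifiable; to salvage your route you would need an analogous chaining statement for the family $\{\mathcal{G}_p\}_{p\in F}$, which is exactly the unproved crossing claim.
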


\begin{proof}

Let $(X,d)$ be a colocally connected continuum, let  $\mu: C(X) \to [0,\mu(X)]$ be a Whitney map and  let $t \in (0,\mu(X))$. Let $A \in \mu^{-1}(t)$ and let $\varepsilon > 0$.   We will find an  open neighborhood $\mathcal{O} \subseteq \mu^{-1}(t)$ of $A$ such that $\mathcal{O} \subseteq U_{H_d}(A, \varepsilon)$ and $\mu^{-1}(t) \setminus \mathcal{O}$ is connected.

Take $\delta > 0$ as in the statement of Lemma \ref{exe}. Since $X$  is colocally connected, for each $a \in A$, there exists an open neighborhood $O_a \subset X$ such that ${\rm diam}_d O_a < \delta$, $X \setminus O_a$ is connected and $\mu(X \setminus O_a) > t$. Then, $\{O_a\}_{a \in A}$ covers $A$. Also, since $X$ is colocally connected and $A$ is compact, we can find a finite collection $\mathcal{H}=\{H_i\}_{i=1}^n$ of open subsets of  $X$ such that 

\smallskip

(1) $\mathcal{H}$ covers $A$, 

(2) for each $i=1,2,\ldots,n$, $A \cap H_i \neq \emptyset$,

(3) for each $i=1,2,\ldots,n$, $X \setminus H_i$ is connected, and

(4) if $H, H' \in \mathcal{H}$ and $H \cap H' \neq \emptyset$, then there exists $a \in A$ such that $H \cup H' \subseteq O_a$. 

\smallskip

We may assume that   if $H_i, H_j \in \mathcal{H}$  and  $|i-j| \le 1$, then $H_i \cap H_j \neq \emptyset$. In this case, note that it is possible  $H_i=H_j$ despite $i \neq j$. For each $i=1,2,\ldots,n$, let $\mathcal{H}_i = \{C \in \mu^{-1}(t) \ |\  C \subseteq X \setminus H_i\}$. Now, we show the following:

\smallskip

\begin{itemize}
    \item[$\bullet$] $\bigcup_{i=1}^n \mathcal{H}_i$ is a subcontinuum of $\mu^{-1}(t)$.
\end{itemize}

 \smallskip
 
 First, by \cite[Exercise 27.7]{illanes}, each $\mathcal{H}_i$ is a subcontinuum of $\mu^{-1}(t)$ (note that each $\mathcal{H}_i$ is nonempty by the fact that  $\mu(X \setminus O_a)>t$ for each $a \in A$ and \cite[Theorem 14.6]{illanes}). Hence, it is enough to show that $\mathcal{H}_i \cap \mathcal{H}_{i+1} \neq \emptyset$ for each $i=1,2,\ldots,n-1$. Let $i=1,2.\ldots,n-1$. Since $H_i \cap H_{i+1} \neq \emptyset$, there exists $a \in A$ such that $H_i \cup H_{i+1} \subseteq O_a$. Since $\mu(X \setminus O_a)>t,$ by \cite[Theorem 14.6]{illanes} we see that there exists a subcontinuum $K \subseteq X \setminus O_a$ such that $\mu(K)=t$. Note that $K \in \mathcal{H}_i \cap \mathcal{H}_{i+1}$. Therefore, we see that $\bigcup_{i=1}^n \mathcal{H}_i$ is a continuum.

 Let $\mathcal{O}= \mu^{-1}(t) \setminus \bigcup_{i=1}^n \mathcal{H}_i$. By the above argument, $\mu^{-1}(t) \setminus \mathcal{O} = \bigcup_{i=1}^n \mathcal{H}_i$ is connected. Also, by (2), it follows that $A \notin \bigcup_{i=1}^n \mathcal{H}_i$. Therefore, $A \in \mathcal{O}$. Thus, $\mathcal{O} \subseteq \mu^{-1}(t)$ is an open neighborhood of $A$.  Finally, we show that  $ \mathcal{O} \subseteq U_{H_d}(A, \varepsilon)$. Let $B \in \mathcal{O}$. Then, $B \notin \bigcup_{i=1}^n \mathcal{H}_i$. Hence, for each $i=1,2,...,n$, $B \cap H_i \neq \emptyset$. Since $\mathcal{H}$ covers $A$ and ${\rm mesh}_d \mathcal{H} < \delta$, it follows that $A \subseteq N_d(B,\delta)$. Note that $\mu(A)=\mu(B)=t$. Hence, by Lemma \ref{exe}, we see that $H_d(A,B) < \varepsilon$. Therefore, $\mathcal{O} \subseteq U_{H_d}(A, \varepsilon)$. 
 
 Thus, we see that $\mu^{-1}(t)$ is colocally connected. This completes the proof. \end{proof}
 
 The following example is appeared in \cite[Example 2.10]{bautista} to show that the property of having only non-weak cut points is not a Whitney reversible property. By the same example, we see that colocal connectedness is not a Whitney reversible property. 

\begin{example}{\rm (cf. \cite[Example 2.10]{bautista})}
Let $X$ be a dendrite whose  branch points are dense in $X$ and let  $\mu : C(X) \to [0,\mu(X)]$ be a Whitney map. By \cite[Theorem 4.8]{good}, for each $t \in (0, \mu(X))$, $\mu^{-1}(t)$ is homeomorphic to the Hilbert cube. Hence, $\mu^{-1}(t)$ is colocally connected for each $t \in (0, \mu(X))$. Since $X$ is a dendrite, $X$ is not colocally connected. Therefore, we see that colocal connectedness is not a Whitney reversible property. 
\end{example}


\begin{thebibliography}{a}
%\bibitem{anderson} R. D. Anderson, {\it Atomic decompositions of %continua}, Duke Math. J., {\bf 23} (1956), 507-514.

%\bibitem{bing}{R.~H.~Bing and F.~B.~Jones, {\it Another homogeneous plane continuum}, Trans. Amer. Math. Soc., \textbf{90} (1959), 171-191.}

\bibitem{bautista}{I. Bautista-Callejas, M. Chac\'on-Tirado and R. Escobedo, {\it Non-weak cut, shore and non-cut points in Whitney levels}, Topology Appl., \textbf{283} (2020), 6pp.}

\bibitem{bobok}J. Bobok, P. Pyrih and B. Vejnar, {\it Non-cut, shore and non-block points in continua}, Glas. Mat. Ser. III {\bf 51}(71)  (2016), no.1, 237-253.

%\bibitem{collins}{W. D. Collins  and E.~J.~Vought, {\it Continuum chainability and monotone decompositions in certain classes of unicoherent continua}, Houston J. Math., \textbf{10}, No.4, (1984), 457-465.}

\bibitem{D}{B.~Espinoza and E.~Matsuhashi, {\it D-continua, $D^{*}$-continua and Wilder continua}, Topology Appl., \textbf{285} (2020), 25pp.}

%\bibitem{wwp}{B.~Espinoza and E.~Matsuhashi, {\it Weakly Whitney preserving maps}, Topology Appl., \textbf{262} (2019), 90-108.}

%\bibitem{fugate}{J. B. Fugate, {\it A characterization of chainable continua}, Canadian J. Math., \textbf{21} (1969), 383-393.}

\bibitem{ford}{J. Ford and  J. W. Rogers, Jr., {\it Refinable maps}, Colloq. Math., \textbf{39} (1978), No.2, 263-269.}

\bibitem{grace}{E. E. Grace, {\it Generalized refinable maps}, Proc. Amer. Math. Soc., \textbf{98} (1986), No.2, 329-335.}

\bibitem{grace2}{E. E. Grace and E. J. Vought, {\it Proximately refinable maps and $\theta_n$-continua}, Topology Proc., \textbf{15} (1990), 39-51.}

\bibitem{good}{J. D. Goodykoontz and S. B. Nadler Jr., {\it Whitney levels in hyperspaces 
of certain Peano continua}, Trans. Amer. Math. Soc., \textbf{274} (1982), No.2, 671-694.}

%\bibitem{hago4}{C.~L.~Hagopian, {\it A class of arcwise connected continua}, Proc. Amer. Math. Soc., \textbf{30} (1971), 164-168.}

%\bibitem{hago3}{C.~L.~Hagopian and L.~G.~Oversteegen, {\it Continuum chainability without arcs}, Houston J. Math., \textbf{21}, No. 2 (1995), 407-411.}

\bibitem{hosokawa}{H. Hosokawa, {\it Aposyndesis and coherence of continua under refinable maps}, Tsukuba J. Math., \textbf{7} (1983), No.2, 367-372.}


\bibitem{illanes}{A.~Illanes and S.~B.~Nadler, Jr., {\it Hyperspaces: Fundamentals and Recent Advances}, Marcel Dekker, Inc., New York, 1999.}

%\bibitem{jani}{J. Janiszewski,} {\it $\ddot{U}$ber die Begriffe “Linie” und “Fl$\ddot{a}$che”}, Proceedings of the Fifth
%International Congress of Mathematicians II, Cambridge (1912), 126-128.

\bibitem{jonesap}{F.~B.~Jones, {\it Aposyndetic continua and certain boundary problems}, Amer. J. Math., \textbf{63} (1941), 545-553.}

%\bibitem{jonesapnotap}{F.~B.~Jones, {\it Concerning aposyndetic and non-aposyndetic continua}, Bull. Amer. Math. Soc., \textbf{58} (1952), 137-151.}

%\bibitem{cwise}{H.~Kato and E.~Matsuhashi, {\it Continuum-wise injective maps,} Topology Appl., \textbf{202} (2016), 410-417.}

%\bibitem{kk}{K.~Kr\'olicki and P.~Krupski, {\it Wilder continua and their subfamilies as coanalytic absorbers}, Topology Appl., \textbf{220} (2017), 146-151.}

%\bibitem{krupski}{P.~Krupski, {\it More non-analytic classes of continua}, Topology Appl., \textbf{127} (2003), No.3, 299-312.}

%\bibitem{loncar1}{I.~Lon\v{c}ar, {\it Whitney map for hyperspaces of continua with the property of Kelley}, JP J. Geom. Topol., \textbf{4} (2004), No.2, 147-156.}

%\bibitem{loncar2}{I.~Lon\v{c}ar, {\it $D$-continuum $X$ admits a Whitney map for $C(X)$ if and only if it is metrizable}, Glas. Mat. Ser. III, \textbf{40}(60)(2005), No.2,  333-337.}

\bibitem{loncar3}{I.~Lon\v{c}ar,{\it Weight and metrizability of inverses under hereditarily irreducible mappings}, An. \cb{S}tiin\cb{t}. Univ. ``Ovidius'' Constan\cb{t}a Ser. Mat. \textbf{16} (2008), No.2, 67-82.}

%\bibitem{loncar4}{I.~Lon\v{c}ar, {\it A note on generalized Whitney maps}, Mat. Vesnik, \textbf{67} (2015), No.4, 233-245.}

\bibitem{macias}{S. Mac\'ias,  {\it Topics on continua},  Chapman $\&$ Hall/CRC, Boca Raton, FL, 2005.}

%\bibitem{mack}{T. Ma\'ckowiak, {\it The condensation of singularities in arc-like continua}, Houston J. Math., \textbf{11}, No. 4 (1985), 535-558.}

\bibitem{nadler1}{S.~B.~Nadler, Jr., {\it Continuum theory: an introduction}, Marcel Dekker, Inc., New York, 1992.}

\bibitem{petrus}{A. Petrus, {\it Whitney maps and Whitney properties of $C(X)$
}, Topology Proc., \textbf{1} (1976), 147-172.}


%\bibitem{whyburn1}{G.~T.~Whyburn, {\it Semi-locally connected sets}, Amer. J. Math., \textbf{61} (1939), No.3, 733-749.}

%\bibitem{wilder2}{B.~E.~Wilder, {\it Concerning point sets with a special connectedness property}, Colloq. Math., \textbf{19} (1968), 221-224.}

%\bibitem{wilder1}{B.~E.~Wilder, {\it Between aposyndetic and indecomposable continua}, Topology Proc., \textbf{17} (1992), 325-331.}
\end{thebibliography}
\end{document}